\def\ni{\noindent}
\def\be{\begin{equation}}
\def\ee{\end{equation}}
\def\bC{\mathbb{C}}
\def\bR{\mathbb{R}}
\def\bH{\mathbb{H}}
\def\bN{\mathbb{N}}
\def\bZ{\mathbb{Z}}
\def\gg{\mathfrak{g}}
\def\gk{\mathfrak{k}}
\def\gp{\mathfrak{p}}
\def\gt{\mathfrak{t}}
\newtheorem{df}{Definition}[section]
\newtheorem{prop}[df]{Proposition}
\newtheorem{remark}[df]{Remark}
\newtheorem{lem}[df]{Lemma}
\begin{document}

\title[A Remark on the First Eigenvalue...Symmetric Spaces]
{A Remark on the First Eigenvalue of the Laplace Operator on $1$-forms for Compact Inner Symmetric Spaces}%
\author{Jean-Louis Milhorat}
\address{Nantes Universit\'e
, CNRS, Laboratoire de Math\'ematiques Jean Leray, LMJL, UMR 6629, F-44000 Nantes, France
}
\email{jean-louis.milhorat@univ-nantes.fr}


\thanks{\textit{Acknowledgements: The author thanks Francis Burstall for having pointed out an incorrect statement in a first version of the present paper, and for the references about spherical representations.}}
\begin{abstract} We remark that on a compact inner symmetric space $G/K$, indowed with the Riemmannian metric given by the Killing form of $G$ signed-changed,  the first (non-zero) eigenvalue of the Laplace operator on $1$-forms is the Casimir eigenvalue of the highest either long or short root of $G$, according as the highest weight of the isotropy representation is long or short. Some results for the first (non-zero) eigenvalue on functions are derived.
\end{abstract}

\maketitle
\section{Introduction} It is well-known that symmetric spaces provide examples where
the spectrum of Laplace or Dirac operators
can be (theoretically) explicitly computed. However this explicit computation is far from being simple
in general and only a few examples are known.  On the other hand, several classical results in geometry 
involve the first (non-zero) eigenvalue of those spectra, so it seems interesting to get this eigenvalue without
computing all the spectrum. The present paper is a proof of the following remark:
\begin{prop} \label{result} Let $G/K$ be a compact inner symmetric space of ``{type I}'', indowed with the Riemmannian metric given by the Killing form of $G$ signed-changed. The first eigenvalue\footnote{by Bochner's vanishing theorem, there are no harmonic $1$-forms on the symmetric spaces considered here, since their Ricci curvature is positive.} of the Laplace operator acting on $1$-forms is given by the Casimir eigenvalue of the highest either long or short root of $G$ (relative to the choice of a common maximal torus $T$ in $G$ and $K$), according as the highest weight of the isotropy representation is long of short. 
\end{prop}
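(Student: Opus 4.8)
The plan is to move to the homogeneous-bundle picture of $\Omega^1(G/K)$, reduce the Hodge--de Rham Laplacian to the Casimir operator of $G$, and then settle a purely combinatorial minimization over the root system of $G$.

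\emph{Step 1 (Decomposition of $1$-forms).} Using $-B$ to identify $T^*(G/K)\otimes\bC\cong G\times_K\gp_\bC$, where $\gp_\bC$ is the complexified isotropy representation, Frobenius reciprocity gives $L^2\Omega^1(G/K)\cong\bigoplus_{\gamma}V_\gamma\otimes\mathrm{Hom}_K(V_\gamma,\gp_\bC)$; so a nontrivial irreducible $G$-module $V_\gamma$ occurs in $\Omega^1$ precisely when $\gp_\bC$ and $V_\gamma|_K$ have a $K$-irreducible constituent in common.

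\emph{Step 2 (Hodge Laplacian $=$ Casimir).} For the Killing metric the canonical connection is the Levi--Civita connection, and for sections of any homogeneous bundle one has the standard Casimir identity $\Omega_G=\nabla^*\nabla+C_\gk$, with $C_\gk$ the fibrewise action of the Casimir operator of $\gk$ (normalized by $-B|_\gk$). I would then observe that for this metric $\mathrm{Ric}=\tfrac12\,g$ — the quick reason being $\mathrm{tr}_\gk(\mathrm{ad}_Y^2)=\mathrm{tr}_\gp(\mathrm{ad}_Y^2)$ for $Y\in\gp$ (write these two traces as $\mathrm{tr}(BA)$ and $\mathrm{tr}(AB)$ for $A=\mathrm{ad}_Y\colon\gk\to\gp$, $B=\mathrm{ad}_Y\colon\gp\to\gk$), so $B(Y,Y)=2\,\mathrm{tr}_\gp(\mathrm{ad}_Y^2)$ — and hence that on $\gp_\bC$ the operator $C_\gk$ equals $\tfrac12\mathrm{Id}$, exactly matching the Weitzenböck term in $\Delta_1=\nabla^*\nabla+\mathrm{Ric}$. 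Thus $\Delta_1=\Omega_G$ on $\Omega^1$, so $\Delta_1$ acts on the $V_\gamma$-isotypic part by the Casimir eigenvalue $c_\gamma=\langle\gamma,\gamma+2\rho\rangle$ in the $-B$-normalization (in particular $c_\theta=1$); since $\mathrm{Ric}>0$ there are no harmonic $1$-forms, and the first eigenvalue is $\min\{\,c_\gamma:\gamma\ne0,\ \gp_\bC\hookrightarrow V_\gamma|_K\,\}$.

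\emph{Step 3 (The minimization).} This is where the inner hypothesis enters: for a common maximal torus $T\subseteq K\subseteq G$ one has $\gp_\bC=\bigoplus_{\alpha\in R_\gp}\gg_\alpha$, a sum of root spaces, so every weight of $\gp_\bC$ is a root of $G$ and all weight multiplicities of $\gp_\bC$ are $1$. The multiplicity-one property rules out the quaternionic type, so either $G/K$ is Hermitian, $\gp_\bC=\gp^+\oplus\gp^-$ with $\gg_\theta\subseteq\gp^+$, whence the highest weight of $\gp^+$ is the (long) highest root $\theta$; or $\gp_\bC$ is $K$-irreducible with highest weight a single root $\mu$. Let $\mu^+$ be the dominant Weyl conjugate of $\mu$, so $\mu^+=\theta$ if $\mu$ is long and $\mu^+=\widetilde\theta$ (the highest short root) if $\mu$ is short. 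For the lower bound: if $\gp_\bC\hookrightarrow V_\gamma|_K$ then $\mu$, hence its dominant conjugate $\mu^+$, is a weight of $V_\gamma$, so $\mu^+\le\gamma$ and $c_\gamma\ge c_{\mu^+}$. For attainment: when $\mu$ is long, $\gamma=\theta$ works because $V_\theta=\gg_\bC\supseteq\gp_\bC$; when $\mu$ is short, $\gamma=\widetilde\theta$ works — the weight set of the quasi-minuscule module $V_{\widetilde\theta}$ is $\{\text{short roots}\}\cup\{0\}$, and a nonzero weight-$\mu$ vector $v\in V_{\widetilde\theta}$ is killed by every $\gg_\beta$ with $\beta\in R_K^+$: indeed $\mu+\beta$ cannot be a root (otherwise $\mu+\beta\in R_\gp$ and $[\gg_\beta,\gg_\mu]\ne0$ would contradict that $\gg_\mu$ is the $\gk^+$-highest weight space of $\gp_\bC$), so $\mu+\beta$ is not a weight of $V_{\widetilde\theta}$ and $\gg_\beta v=0$; hence $v$ is a $K$-highest weight vector generating a copy of $\gp_\bC$ in $V_{\widetilde\theta}|_K$. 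Therefore the first eigenvalue is $c_{\mu^+}$, namely $\mathrm{Cas}(\theta)$ or $\mathrm{Cas}(\widetilde\theta)$ according as the isotropy highest weight is long or short.

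I expect the attainment step for short $\mu$ to be the main obstacle: it relies on knowing the weight set of $V_{\widetilde\theta}$ precisely and on the ``$\gg_\mu$ is maximal in $\gp_\bC$'' argument, and it uses essentially that the quaternionic-type case does not arise, so that a single highest-weight vector already produces all of $\gp_\bC$. Checking $\Delta_1=\Omega_G$ in Step 2 — in particular $\mathrm{Ric}=\tfrac12 g$ — is routine but deserves care, and one should bear in mind that dropping the inner hypothesis destroys Step 3, since then the isotropy weights need not be roots of $G$ at all.
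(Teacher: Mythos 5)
Your argument is correct and reaches the paper's conclusion, but it replaces the paper's key lemma by a genuinely different (and more uniform) one. Both proofs share the same skeleton: Frobenius reciprocity identifies $\mathrm{Spec}(\Delta_1)$ with the Casimir eigenvalues $c_\gamma$ of those $\gamma\in\widehat G$ whose restriction to $K$ contains the isotropy representation; the lower bound $c_\gamma\geq c_\beta$ comes in both cases from the fact that the dominant Weyl conjugate $\beta$ of the isotropy highest weight $\alpha$ is then a weight of $V_\gamma$, so $\|\beta+\delta_G\|\leq\|\gamma+\delta_G\|$. The difference is in the attainment step. When $\alpha$ is long, both proofs use $V_\beta=\gg_\bC=\gk_\bC\oplus\gp_\bC$. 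When $\alpha$ is short, the paper proves $\mathrm{mult}(\rho,\mathrm{Res}^G_K(\rho_\beta))\neq 0$ by a case-by-case check over the three spaces $\mathrm{SO}(2p+1)/\mathrm{SO}(2p)$, $\mathrm{Sp}(p+q)/\mathrm{Sp}(p)\times\mathrm{Sp}(q)$, $\mathrm{F}_4/\mathrm{Spin}(9)$: it exhibits an explicit $w\in W_G$ with $w\cdot\alpha=\beta$ sending every simple $K$-root to a positive $G$-root, and transports the maximal vector of $V_\beta$ by a representative of $w^{-1}$ to get a $K$-maximal vector of weight $\alpha$. You instead argue uniformly: the weights of the quasi-minuscule module $V_\beta$ are exactly the short roots together with $0$, and for any positive compact root $\theta$ the sum $\alpha+\theta$ is neither zero nor a root (else $[\gg_\theta,\gg_\alpha]$ would be a nonzero weight space of $\gp_\bC$ above $\alpha$), so the $\alpha$-weight vector of $V_\beta$ is automatically $K$-singular and generates a copy of $\gp_\bC$. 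This avoids the explicit tables and Weyl-group elements entirely; what the paper's case-by-case treatment buys in exchange is the explicit root data it reuses afterwards to evaluate $c_\beta$ numerically ($\tfrac{p}{2p-1}$, $\tfrac{p+q}{p+q+1}$, $\tfrac23$) and to analyze the spectrum on functions. Your Step 2 (deriving $\Delta_1=\Omega_G$ from the Weitzenb\"ock formula and $\mathrm{Ric}=\tfrac12 g$) is a self-contained justification of what the paper simply quotes from the literature, and your observation that the multiplicity-one property of the root-space decomposition of $\gp_\bC$ forces either irreducibility or the Hermitian splitting $\gp^+\oplus\gp^-$ is a point the paper glosses over.
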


\noindent Note that, although the result involves the choice of a basis of roots, it does not depend on this particular choice, by the transitivity of the Weyl group $W_G$ of $G$ on root bases. Indeed, by the Freudenthal formula, the Casimir eigenvalue of the highest (long of short) root $\beta$ is given by
$$\langle \beta+2\delta_G,\beta\rangle\,,$$
where $\delta_G$ is the half-sum of the positive roots of $G$, and $\langle\,,\,\rangle$ the scalar product on the set of weights induced by the Killing form of $G$ signed-changed. Hence, by the $W_G$-invariance of the scalar product, two choices of a basis of roots (relative to the choice of a common maximal torus $T$ in $G$ and $K$) will lead to the same Casimir eigenvalue.
On the other hand, recall that for the symmetric spaces considered here, the group $G$ is simple, hence at most two lenghts occur in the sets of roots (cf. for instance \cite{Hum}). So, if only one lenght occurs, the Casimir eigenvalue of the highest root has only to be considered.

\noindent The study of subgroups of maximal rank in a compact Lie group was initiated by A. Borel and J. De Siebenthal
in \cite{BdS}, with an explicit description for compact simple groups, resulting in the following complete list of 
irreducible compact simply-connected Riemannian inner symmetric spaces $G/K$ of type I (cf. J. A. Wolf's book \cite{Wol}), where the first eigenvalue is given.

\begin{center}
 \begin{tabular}{|c|c|c|c|}
 \hline $G/K$ & \tiny{Number of} & \tiny{Lenght of the highest} & \tiny{First non-zero eigenvalue}\\
  &\tiny{$G$-root}  & \tiny{weight of the} & \tiny{of the Laplacian}\\
  &\tiny{lenghts} & \tiny{isotropy representation} &\tiny{on $1$-forms}\\
 \hline $\frac{\mathrm{SU}(p+q)}{\mathrm{S}(\mathrm{U}(p)\times \mathrm{U}(q))}$, $1\leq p\leq q$ &$1$ & long & $1$\\
 \hline $\frac{\mathrm{SO}(2p+2q+1)}{\mathrm{SO}(2p)\times \mathrm{SO}(2q+1)}$, $p\geq 1$, $q\geq 0$ & $2$ &short if $q=0$ 
 & $\frac{p}{2p-1}$\, \tiny{($G/K=S^{2p})$}\\
 &&long if $q\geq 1$ & $1$ \\
 \hline $\frac{\mathrm{Sp}(p+q)}{\mathrm{Sp}(p)\times\mathrm{Sp}(q)}$, $1\leq p\leq q$ & $2$& short &$\frac{p+q}{p+q+1}$\\
 \hline $\frac{\mathrm{Sp}(n)}{\mathrm{U}(n)}$ & $2$& long & $1$\\
 \hline $\frac{\mathrm{SO}(2p+2q)}{\mathrm{SO}(2p)\times \mathrm{SO}(2q)}$, $1\leq p \leq q$ & $1$& long & $1$\\
 \hline $\frac{\mathrm{SO}(2n)}{\mathrm{U}(n)}$, $n>2$ & $1$& long & $1$\\
 \hline $\frac{\mathrm{G}_2}{\mathrm{SO}(4)}$ & $2$& long & $1$ \\
 \hline $\frac{\mathrm{F}_4}{\mathrm{Sp}(3)\cdot \mathrm{Sp}(1)}$ & $2$&long &$1$\\
 \hline $\frac{\mathrm{F}_4}{\mathrm{Spin}(9)}$ & $2$& short& $2/3$\\
 \hline $\frac{\mathrm{E}_6}{\mathrm{SO}(10)\cdot \mathrm{SO}(2)}$ & $1$&long&$1$\\
 \hline $\frac{\mathrm{E}_6}{\mathrm{SU}(6)\cdot \mathrm{SU}(2)}$ & $1$&long&$1$\\
 \hline $\frac{\mathrm{E}_7}{\mathrm{E}_6 \cdot \mathrm{SO}(2)}$ & $1$&long&$1$\\
 \hline $\frac{\mathrm{E}_7}{\mathrm{SU}(8)/\{\pm I\}}$ & $1$&long&$1$\\
 \hline $\frac{\mathrm{E}_7}{\mathrm{SO}'(12)\cdot \mathrm{SU}(2)}$ & $1$&long&$1$\\
 \hline $\frac{\mathrm{E}_8}{\mathrm{SO}'(16)}$ & $1$&long&$1$\\
 \hline $\frac{\mathrm{E}_8}{\mathrm{E}_7\cdot \mathrm{SU}(2)}$ & $1$&long&$1$\\
 \hline
 \end{tabular}

\end{center}
(By convention, if all roots have same lenght, they are called long. The notation $\mathrm{SO}'(n)$ is used to mention that $\mathrm{SO}(n)$ acts by means of a spin representation). The (rather puzzling) fact that the eigenvalue is equal to $1$ in most of the cases is explained below.

\noindent Some results for the spectrum on functions may be derived. Indeed, if a function $f$ verifies $\Delta f=\lambda\, f$, where $\lambda$ is the first (nonzero) eigenvalue, then $\Delta df=\lambda \,df$, hence $\lambda\geq \mu$, where $\mu$ is the first eigenvalue on $1$-forms. We classify in the following the symmetric spaces for which this inequality is an equality in all the cases considered here.

\noindent It can be checked that the values given in the above table agree with already konwn results (mainly on the spectrum of functions): compare with the table given in \cite{Nag61}, with the explicit computations of the whole spectrum given in \cite{CaWol76} and \cite{Bes78}
for Compact Rank One Symmetric Spaces: $\bR P^{n}$, $\bC P^{n}$, $\bH P^{n}$, $\mathbb{C a} P^2=\mathrm{F}_4/\mathrm{Spin}(9)$, with the partial results (not always very explicit) obtained for the spectrum of Grassmannians (\cite{IT78}, \cite{Str80b}, \cite{Tsu}, \cite{TK03a}, \cite{ECh04},  \cite{H07},  \cite{ECh12}) and the spectrum of $\mathrm{Sp}(n)/\mathrm{U}(n)$,  (\cite{TK03}, \cite{HC11})\footnote{Many references for the explicit computations of spectra may be found in\\ https://mathoverflow.net/questions/219109/explicit-eigenvalues-of-the-laplacian.}.

\section{Preliminaries for the proof}
We consider a compact simply connected irreducible symmetric space $G/K$ of ``type I'',
where $G$ is a simple compact and simply-connected Lie group and $K$ is the
connected subgroup formed by the fixed elements of an involution
$\sigma$ of $G$.  This involution induces the
Cartan decomposition of the Lie algebra $\gg$ of $G$ into
$$\gg=\gk\oplus\gp\,,$$ where $\gk$ is the Lie algebra of $K$ and
$\gp$ is the vector space $\{X\in\gg\,;\, \sigma_{*}\cdot X=-X\}$.
This space $\gp$ is
canonically identified with the tangent space to $G/K$ at the
point $o$, $o$ being the class of the neutral element of $G$.
We consider here irreducible symmetric spaces, that is, the isotropy representation
$$\begin{array}{crcl}
\rho :&K&\longrightarrow& \mathrm{GL}(\gp)\\
& k&\longmapsto& \mathrm{Ad}(k)_{|\gp}
\end{array}
$$
is irreducible. Hence all $G$-invariant scalar products on $\gp$, and so all
 $G$-invariant Riemannian metrics on $G/K$ are proportional. We
consider the metric induced by the Killing form of $G$
sign-changed. With this metric, $G/K$ is an Einstein space with
scalar curvature $\mathrm{Scal}=n/2$, (cf. for instance
Theorem~7.73 in \cite{Bes}).

\ni As $G/K$ is an homogeneous space, the bundle of $p$-forms on $G/K$ may be identified with the bundle 
$G\times_{\wedge^{p}\rho} \wedge^{p} \gp$. Any $p$-form $\omega$ is then identified with a $K$-invariant function $
G\rightarrow \wedge^{p} \gp$, that is a function verifying 
$$\forall g\in G\,,\quad \forall k\in K\,,\quad \omega(gk)=\wedge^{p}\rho(k^{-1})\, \omega(g)\,.$$
Let $L_{K}^{2}(G,\wedge^{p} \gp)$ be the Hilbert space of $L^{2}$
$K$-equivariant functions $G\rightarrow \wedge^{p} \gp$. The Laplacian
operator $\Delta_p$ extends to a self-adjoint operator on
$L_{K}^{2}(G,\wedge^{p} \gp)$. Since it is an elliptic operator, it has a
(real) discrete spectrum. By the Peter-Weyl theorem,
 the natural unitary representation of $G$ on the Hilbert space
  $L_{K}^{2}(G,\wedge^{p} \gp)$
 decomposes into the Hilbert sum
 $$\bigoplus_{\gamma \in \widehat{G}}V_{\gamma}\otimes
\mathrm{Hom}_{K}(V_{\gamma},\wedge^{p} \gp)\,,$$ where $\widehat{G}$ is
the set of equivalence classes of irreducible unitary complex
representations of $G$, $(\rho_{\gamma},V_{\gamma})$ represents an
element $\gamma\in\widehat{G}$ and
$\mathrm{Hom}_{K}(V_{\gamma},\wedge^{p} \gp)$ is the vector space of
$K$-equivariant homomorphisms $V_{\gamma}\rightarrow \wedge^{p} \gp$, i.e.
$$\mathrm{Hom}_{K}(V_{\gamma},\wedge^{p} \gp)=\{ A\in
\mathrm{Hom}(V_{\gamma},\wedge^{p} \gp)\; \mathrm{s.t.}\; \forall k\in
K\,, A\circ \rho_{\gamma}(k)=\wedge^{p}\rho(k)\circ A\}\,.$$ The
injection $ V_{\gamma}\otimes
\mathrm{Hom}_{K}(V_{\gamma},\wedge^{p} \gp)\hookrightarrow
L_{K}^{2}(G,\wedge^{p} \gp)$ is given by $$v\otimes A \mapsto
\Big(g\mapsto (A\circ\rho_{\gamma}(g^{-1})\,)\cdot v\Big)\,.$$
The Laplacian $\Delta_p$ respects the above decomposition, and its restriction to the space $V_{\gamma}\otimes
\mathrm{Hom}_{K}(V_{\gamma},\wedge^{p} \gp)$ is nothing else but the Casimir operator $\mathcal{C}_{\gamma}$ of the representation
$(\rho_{\gamma},V_{\gamma})$, (see \cite{IT78}):
\begin{equation*}
\Delta (v\otimes A)= v\otimes
(A\circ \mathcal{C}_{\gamma})\,.
\end{equation*}
But since the representation is irreducible, the Casimir operator is a
scalar multiple of identity, $\mathcal{C}_{\gamma}=c_{\gamma}\,
\mathrm{id}$, where the eigenvalue $c_{\gamma}$ only depends of
$\gamma\in\widehat{G}$. Hence the spectrum of $\Delta_p$ is the set of the $c_\gamma$ for which 
$\mathrm{Hom}_{K}(V_{\gamma},\wedge^{p} \gp)$ is non trivial. 

\ni Denote by  $\wedge^{p}\rho=\oplus^{N}_{j=1} \rho^{p}_j$, the decomposition of
the representation $K\rightarrow\wedge^{p} \gp$ into irreducible
components. Note that for $p=0$ or $1$, the decomposition has only one component, since the representations are respectively the trivial one, and the isotropy representation, which are both irreducible.

\ni Now, by the Frobenius reciprocity theorem, one has 
$$\dim(\mathrm{Hom}_{K}(V_{\gamma},\wedge^{p} \gp)=\sum_{j=1}^{N} \mathrm{mult}(\rho^{p}_j,\mathrm{Res}^{G}_{K}(\rho_{\gamma}))\,,
$$
where $\mathrm{Res}^{G}_{K}(\rho_{\gamma})$ is the restriction to $K$ of the representation $\rho_{\gamma}$.
 
\ni So, finally,
\begin{equation*}
\mathrm{Spec}(\Delta_p)=\{c_{\gamma}\;;\;
\gamma\in\widehat{G}\;\mathrm{s.t.}\;\exists j\;\mathrm{s.t.}\;
\mathrm{mult}(\rho^{p}_j,\mathrm{Res}^{G}_{K}(\rho_{\gamma}))\neq
0\}\,.\end{equation*}
In particular
\begin{align}\label{spect0}
\mathrm{Spec}(\Delta_0)&=\{c_{\gamma}\;;\;
\gamma\in\widehat{G}\;\mathrm{s.t.}\;
\mathrm{mult}(\mathrm{triv.repr.},\mathrm{Res}^{G}_{K}(\rho_{\gamma}))\neq
0\}\,,\\
\intertext{and}
\mathrm{Spec}(\Delta_1)&=\{c_{\gamma}\;;\;
\gamma\in\widehat{G}\;\mathrm{s.t.}\;
\mathrm{mult}(\rho,\mathrm{Res}^{G}_{K}(\rho_{\gamma}))\neq
0\}\,.
\end{align}

\section{Proof of the result \ref{result}.}
\ni We furthermore assume that
$G$ and $K$ have same rank and consider a fixed common maximal torus $T$.

\ni Let $\Phi$ be the set of non-zero roots of the group $G$
 with respect to $T$. According to a classical terminology, a root
 $\theta$ is called compact if the corresponding root space is contained
 in $\gk_{\mathbb{C}}$ (that is, $\theta$ is a root of $K$ with respect to $T$) and
 noncompact if the root space is contained in $\gp_{\mathbb{C}}$.
Let $\Phi_{G}^{+}$  be the set of positive roots of $G$, with respect to a choice of a basis of simple roots.
The half-sum
of the positive roots of $G$ is denoted by
$\delta_{G}$. The space of weights is endowed with the
$W_{G}$-invariant scalar product $<\,,\,>$ induced by the Killing
form of $G$ sign-changed.

\ni The symmetric spaces considerer here being irreducible, the space $\gp_{\mathbb{C}}$ is irreducible.
Let $\alpha$ be the highest weight of this representation. As the group $G$ is simple, there are at most two root lenghts, and all roots of a given lenght are conjugate under the Weyl group $W_G$ of $G$ (see \cite{Hum}, \$.10.4, lemma C). So $\alpha$ is conjugate to either the maximal root of $G$ or the highest short root. Denote by $\beta$ this root, and let $w$ be any element in  $W_G$ such that $w\cdot \alpha=\beta$. We claim that 
\begin{lem} \label{lem.mult} The multiplicity  $\mathrm{mult}(\rho,\mathrm{Res}^{G}_{K}(\rho_{\beta}))$ is $\neq
0$.
\end{lem}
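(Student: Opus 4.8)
To show that $\mathrm{mult}(\rho,\mathrm{Res}^{G}_{K}(\rho_{\beta}))\neq 0$, I would relate the restriction to $K$ of the irreducible $G$-representation with highest weight $\beta$ to the isotropy representation $\gp_{\mathbb C}$, whose highest weight is $\alpha$. Since $\beta$ is obtained from $\alpha$ by an element $w\in W_G$, the key is to transport weight-space information along $w$. The natural candidate for a $K$-equivariant copy of $\gp_{\mathbb C}$ inside $V_\beta$ is the span of the weight vectors of $V_\beta$ whose weights are noncompact roots, i.e. weights lying in the set $\{\,\theta\in\Phi : \mathfrak g_{\mathbb C}^{\theta}\subset\gp_{\mathbb C}\,\}$, together with the relevant part of the zero weight space. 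So the first step is to describe, as explicitly as possible, the weights of $V_\beta$: since $\beta$ is a root (the highest long or highest short root), $V_\beta$ is either the adjoint representation ($\beta$ = maximal root) or the "short-root representation" of $G$, and in both cases its nonzero weights are precisely the roots of the corresponding length, each with multiplicity one, plus a zero weight space of known dimension.

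\textbf{Key steps.} First, I would recall that the noncompact roots are exactly the $T$-weights of $\gp_{\mathbb C}$, and that $\gp_{\mathbb C}$, being $K$-irreducible of highest weight $\alpha$, is spanned by those root vectors (plus possibly part of $\gt_{\mathbb C}$, but for an inner symmetric space of type I the relevant weights are the noncompact roots — all nonzero since $T\subset K$). Second, I would observe that the involution $\sigma$ commutes with $T$, so each root is either compact or noncompact, and that this $\bZ/2$-grading of $\Phi$ is $W_K$-equivariant. Third — the crucial point — I would show that the set of noncompact roots of a given length is a single $W_K$-orbit (or at least that it is $W_K$-stable with highest weight $\alpha$), and then use $w\cdot\alpha=\beta$ to match up weight data: inside $V_\beta$ the weights that are noncompact roots span a $K$-subrepresentation containing a vector of weight $\alpha$ that is a highest weight vector for $K$, which forces a copy of $(\rho,\gp_{\mathbb C})$ to appear. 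Concretely, one can argue that $\mathrm{mult}(\alpha, \mathrm{Res}^G_K \rho_\beta)\ge 1$ because $\alpha$ is a weight of $V_\beta$ (being $W_G$-conjugate to the highest weight $\beta$, all $W_G$-conjugates of $\beta$ are weights of $V_\beta$), and $\alpha$ is $K$-dominant, so by considering the $K$-isotypic decomposition of $\mathrm{Res}^G_K\rho_\beta$ and the fact that $\alpha$ is the highest weight of $\gp_{\mathbb C}$, the component with highest weight $\alpha$ must occur; one then checks the weight vector of weight $\alpha$ in $V_\beta$ is annihilated by the positive root vectors of $K$, which follows because any $K$-positive root added to $\alpha$ would give a weight of $V_\beta$ of length exceeding that of $\beta$, impossible.

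\textbf{Main obstacle.} The delicate step is the last one: verifying that the weight vector of weight $\alpha$ inside $V_\beta$ is genuinely a $K$-highest weight vector, equivalently that $\alpha+\theta$ is not a weight of $V_\beta$ for any $K$-positive root $\theta$. Since $V_\beta$ has all its nonzero weights of length $\le$ length of $\beta =$ length of $\alpha$, and $\langle\alpha,\theta\rangle$ could a priori be negative for some compact root $\theta$, one must use more than a crude length bound — one needs that $\alpha$, being the highest weight of the $K$-irreducible $\gp_{\mathbb C}$, is dominant for the chosen $K$-positive system, and that the weights of $V_\beta$ of maximal length form exactly the set of roots of that length, so $\alpha+\theta$ (having strictly larger length unless it is again a root of the same length) cannot occur. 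I expect the bookkeeping to reduce to the two cases $\beta=$ maximal root (so $V_\beta=\gg_{\mathbb C}$, where the claim is essentially that $\gp_{\mathbb C}\subset\gg_{\mathbb C}$ is a $K$-subrepresentation, which is immediate) and $\beta=$ highest short root, where one invokes the known weight structure of the short-root (little adjoint) representation; the uniformity of the argument across these cases, rather than any single hard computation, is the real content.
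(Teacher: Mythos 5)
Your proposal is correct, but the way you certify the key claim differs from the paper's. Both proofs reduce to exhibiting a $K$-maximal vector of weight $\alpha$ inside $V_{\beta}$ (and both dispose of the long-root case identically, via $\gg_{\bC}=\gk_{\bC}\oplus\gp_{\bC}$). The paper constructs this vector as $\rho_{\beta}(g)\cdot v_{\beta}$, where $v_{\beta}$ is the $G$-maximal vector and $g$ represents $w^{-1}$ for a \emph{carefully chosen} $w\in W_G$ with $w\cdot\alpha=\beta$; it shows this vector is killed by $E_{\theta}$ whenever $w\cdot\theta\in\Phi_G^{+}$, and then verifies case by case (for $\mathrm{SO}(2p+1)/\mathrm{SO}(2p)$, $\mathrm{Sp}(p+q)/\mathrm{Sp}(p)\times\mathrm{Sp}(q)$, $\mathrm{F}_4/\mathrm{Spin}(9)$) that some $w$ sends every simple $K$-root to a positive $G$-root. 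You instead take the $\alpha$-weight vector of $V_{\beta}$ directly (it exists since $\alpha\in W_G\cdot\beta$) and kill all positive $K$-root actions at once by the norm estimate $\|\alpha+\theta\|^2=\|\alpha\|^2+2\langle\alpha,\theta\rangle+\|\theta\|^2>\|\beta\|^2$, using $K$-dominance of $\alpha$ and the fact that every weight of $V_{\beta}$ lies in the convex hull of $W_G\cdot\beta$; this is uniform and eliminates the paper's case-by-case search for $w$, at the price of invoking that general norm bound. Two small remarks: your worry that $\langle\alpha,\theta\rangle$ might be negative is resolved exactly as you say (by $K$-dominance of $\alpha$), and once it is, you do not even need the precise description of the maximal-length weights of the little adjoint representation --- the convex-hull bound alone suffices; and the intermediate musings about the noncompact roots of a given length forming a single $W_K$-orbit are both unnecessary and false in general (in the Hermitian case $\gp_{\bC}=\gp^{+}\oplus\gp^{-}$ gives at least two orbits), but since your final argument bypasses them, this does not affect the proof.
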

As the proof differs if either $\alpha$ is long or short, we first have a glance to symmetric spaces for which $\alpha$ is short.
\subsection{Symmetric spaces for which the highest weight of the isotropy representation of $K$ is a short root}\label{sh.root}
First note that if $G$ has only one root-lenght, then the highest weight $\alpha$ of the isotropy representation is necessarily a long root.
So we only have to consider symmetric spaces $G/K$ for which $G$ has two root-lenghts. Using for instance the table~2, p. 66 in \cite{Hum}, 
we have to look to the following symmetric spaces. 
\begin{enumerate} 
 \item $\mathrm{SO}(2p+2q+1)/\mathrm{SO}(2p)\times \mathrm{SO}(2q+1)$. We consider here $G=\mathrm{Spin}(2p+2q+1)$. Identifying $\bR^{2p}$ and 
 $\bR^{2q+1}$ with the subspaces spanned respectively by $e_1,\ldots, e_{2p}$ and $e_{2p+1},\ldots,e_{2p+2q+1}$, where $(e_1,\ldots,e_{2p+2q+1})$ is the canonical basis of $\bR^{2p+2q+1}$, $K$ is the subgroup of $G$ defined by
 \begin{align*}
  \mathrm{Spin}(2p)\cdot\mathrm{Spin}(2q+1)=&\big\{\psi \in \mathrm{Spin}(2p+2q+1)\,;\, \psi=\varphi\,\phi\,,\\
  &\qquad\varphi\in \mathrm{Spin}(2p)\,,\phi\in \mathrm{Spin}(2q+1)\big\}\,,
 \end{align*}
 (Note that $K=\mathrm{Spin}(2p)$, when $q=0$).
 
\noindent  We consider the common torus of $G$ and $K$ defined
by $$T=\left\{ \sum_{k=1}^{p+q}\big(\cos
(\beta_{k})+\sin(\beta_{k})\,e_{2k-1}\cdot e_{2k}\big)\,;\,
\beta_{1} ,\ldots, \beta_{p+q}\, \in
\mathbb{R}\right\}\,.$$
The Lie algebra of $T$ is $$\gt=\left\{
\sum_{k=1}^{p+q}\beta_{k}\,e_{2k-1}\cdot e_{2k}\,;\,(\ref{spect0})
\beta_{1} ,\ldots, \beta_{p+q}\, \in
\mathbb{R}\right\}\,.$$ We denote by $(x_1, \ldots ,
x_{p+q})$ the basis of  ${\gt}^{*}$ given by $$
x_{k}\cdot \sum_{j=1}^{p+q}\beta_{j}\,e_{2j-1}\cdot
e_{2j}=\beta_{k}\,.$$ We introduce the basis $(\widehat{x}_1,\ldots , \widehat{x}_{p+q})$ of $i\,\gt^{*}$ defined by
$$\widehat{x}_k:=2i\,x_{k}\,,\quad k=1,\ldots,p+q\,.$$ A
vector $\mu\in i\, \gt^{*}$ such that
$\mu=\sum_{k=1}^{p+q} \mu_{k}\, \widehat{x}_{k}$, is
denoted by $$ \mu= (\mu_{1} ,\mu_{2},\ldots
,\mu_{p+q})\, .$$ 
The restriction to $\gt$ of the
Killing form $\mathrm{B}$ of $G$ is given by $$ \mathrm{B}(e_{2k-1}\cdot
e_{2k},e_{2l-1}\cdot e_{2l})=-8(2p+2q-1)\,\delta_{kl}\,.$$ It is
easy to verify that the scalar product on $i \, \gt^{*}$ induced
by the Killing form sign changed is given by
\begin{equation}\label{scp1}\begin{split} \forall\mu=(\mu_1,\ldots
,\mu_{p+q})\in i \, \gt^{*}\,&,\,\forall
\mu'=(\mu'_1,\ldots , \mu'_{p+q})\, \in i\, \gt^*\,,\\
<\mu ,\mu'>&=\frac{1}{2(2p+2q-1)}\,\sum_{k=1}^{p+q} \mu_k\,
\mu'_k \,.\end{split}
\end{equation}
Considering the decomposition
of the complexified Lie algebra of $G$ under the action of $T$, it
is easy to verify that $T$ is a common maximal torus of $G$ and
$K$, and that the respective roots are given by (see for instance chapter~12.4 in \cite{BH3M} for details),
\begin{align*}
&\begin{cases}\pm(\widehat{x}_{i}+\widehat{x}_{j})\,,\;
\pm(\widehat{x}_{i}-\widehat{x}_{j})\,,\; 1\leq i<j\leq
p+q\,,\\
\pm \widehat{x_i}\,,\; 1\leq i\leq p+q\,,\end{cases}
&&&\text{for } G\,,\\
&\begin{cases}\pm(\widehat{x}_{i}+\widehat{x}_{j})\,,\,
\pm(\widehat{x}_{i}-\widehat{x}_{j})\,,\,1\leq i<j\leq
p\,,\,p+1\leq i<j\leq p+q\,,\\
\pm \widehat{x_i}\,,\; p+1\leq i\leq p+q\,,\end{cases}
 &&&\text{for } K\,.
\end{align*}
We consider as sets of positive roots
\begin{align*}\Phi_{G}^{+}&=\left\{\widehat{x}_{i}+\widehat{x}_{j}\,,\,
\widehat{x}_{i}-\widehat{x}_{j}\,,\;1\leq i<j\leq
p+q\,,\; \widehat{x}_i\,,\, 1\leq i\leq p+q\right\}\,,\\ \intertext{and} \Phi_{K}^{+}&=
\left\{\begin{cases}\widehat{x}_{i}+\widehat{x}_{j}\,,\\
\widehat{x}_{i}-\widehat{x}_{j}\,,\end{cases}\;\begin{cases}1\leq i<j\leq p\,,\\p+1\leq i<j\leq p+q\,,\end{cases}\;
\widehat{x}_i\,,\, p+1\leq i\leq p+q
 \right\}\,.
\end{align*}
so the set of positive non-compact roots is
$$\Phi^{+}_{\gp}=\left\{\begin{cases}\widehat{x}_{i}+\widehat{x}_{j}\,,\\
\widehat{x}_{i}-\widehat{x}_{j}\,,\end{cases}\; 1\leq i\leq p\,,\, p+1\leq j \leq p+q\,,\;\widehat{x}_i\,,\, 1\leq i\leq p\right\}\,.$$
Note that the sets
\begin{align*}
\Delta_{G}&=\{\widehat{x}_i-\widehat{x}_{i+1}\,,\;1\leq i\leq p+q-1\,, \widehat{x}_{p+q}\}\\
\intertext{and}
\Delta_{K}&=\left\{\begin{cases}\widehat{x}_i-\widehat{x}_{i+1}\,,\;1\leq i\leq p-1\,,\; \widehat{x}_{p-1}+\widehat{x}_{p}\,,\\
\widehat{x}_i-\widehat{x}_{i+1}\,,\;p+1\leq i\leq p+q-1\,,\; \widehat{x}_{p+q}\,,\end{cases}
\right\}\,,
\end{align*}
are basis of $G$-roots and $K$-roots respectively. So,

\noindent Any $\mu=(\mu_1,\ldots,\mu_{p+q})\in i\, \gt^{*}$ 
\begin{itemize}
 \item is a dominant $G$-weight if and only if
 $$\mu_1\geq \mu_2\geq \cdots \geq \mu_{p+q}\geq 0\,,$$
 and the $\mu_i$ are all simultaneously integers or half-integers,
 \item is a dominant $K$-weight if and only if
 $$\begin{cases}\mu_1\geq \mu_2\geq \cdots \geq \mu_{p-1}\geq |\mu_p|\,,\\
    \mu_{p+1}\geq \mu_{p+2}\geq \cdots \geq \mu_{p+q}\geq 0\,,
   \end{cases}$$
 and the $\mu_i$, for $1\leq i\leq p$ or $p+1\leq i\leq p+q$,  are all simultaneously integers or half-integers.
\end{itemize}
Hence 
\begin{itemize}
 \item If $q=0$, the highest weight of $\gp_{\bC}$ is the short root $\alpha=\widehat{x_1}$, (which is also the highest shortest root $\beta$ of $G$).
 \item If $q>0$, the highest weight of $\gp_{\bC}$ is the long root $\alpha=\widehat{x_1}+\widehat{x}_{p+1}$.
\end{itemize}
\item $\mathrm{Sp}(p+q)/\mathrm{Sp}(p)\times\mathrm{Sp}(q)$. The space $\mathbb{H}^{p+q}$ is viewed as a right vector
space on $\mathbb{H}$ in such a way that $G$ may be identified
with the group $$ \left\{ A\in
\mathrm{M}_{p+q}(\mathbb{H})\,;\,{}^{t}AA=I_{p+q}\right\}\,,$$
acting on the left on $\mathbb{H}^{p+q}$ in the usual way. The
group $K$ is identified with the subgroup of $G$ defined by $$
\left\{ A\in \mathrm{M}_{p+q}(\mathbb{H})\,;\,A=\begin{pmatrix}
B&0\\0&C
\end{pmatrix}\,,\,{}^{t}BB=I_{p}\,,\,\,{}^{t}CC=I_{q} \right\}\,.$$
Let $T$ be the common torus of $G$
and $K$ 
\begin{equation}\label{maxT}T:=\left\{\begin{pmatrix}
\mathrm{e}^{\mathbf{i}\beta_{1}}&&\\ &\ddots&\\ &&
\mathrm{e}^{\mathbf{i}\beta_{p+q}}\end{pmatrix}\;,\;  \beta_{1}
,\ldots, \beta_{p+q}\, \in \mathbb{R} \right\}\; ,
\end{equation}where
$$\forall \beta \in \mathbb{R}\,,\quad
\mathrm{e}^{\mathbf{i}\beta}:=\cos(\beta)+\sin(\beta)\,\mathbf{i}\,,$$
$(1,\mathbf{i},\mathbf{j},\mathbf{k})$ being the standard basis of
$\mathbb{H}$.

\noindent The Lie algebra of $T$ is $$\gt=\left\{
\begin{pmatrix} \mathbf{i}\beta_{1}&&\\ &\ddots&\\ &&
\mathbf{i}\beta_{p+q}\end{pmatrix}\; ;\; \beta_{1}, \beta_{2},
,\ldots, \beta_{p+q}\, \in \mathbb{R} \right\}\; .$$ We denote by(\ref{spect0})
$(x_1, \ldots , x_{p+q})$ the  basis of  ${\gt}^{*}$ given by $$
x_k\cdot\begin{pmatrix} \mathbf{i}\beta_{1}&&\\ &\ddots&\\ &&
\mathbf{i}\beta_{p+q}\end{pmatrix}= \beta_{k}\, .$$ A vector
$\mu\in i\, \gt^{*}$ such that $\mu=\sum_{k=1}^{p+q} \mu_{k}\,
\widehat{x}_{k}$, in the basis\\  $(\widehat {x}_k\equiv i\,
x_k)_{k=1,\ldots ,p+q}$, is denoted by $$ \mu=
(\mu_{1},\mu_{2},\ldots ,\mu_{p+q})\, .$$
The restriction to $\gt$
of the Killing form $\mathrm{B}$ of $G$ is given by $$\forall
X\in\gt\,,\,\forall Y \in\gt\,,\quad \mathrm{B}(X,Y)= 4\, (p+q+1)\,\Re\,
\big(\,\mathrm{tr} (X\,Y)\,\big)\,.$$ It is easy to verify that
the scalar product on $i \, \gt^{*}$ induced by the Killing form
sign changed is given by 
\begin{equation}\label{scp2} \begin{split}
\forall\mu=(\mu_1,\ldots ,\mu_{p+q})\in i \, \gt^{*}\,&,\,\forall
\mu'=(\mu'_1,\ldots , \mu'_{p+q})\, \in i\, \gt^*\,,\\ <\mu
,\mu'>&=\frac{1}{4(p+q+1)}\,\sum_{k=1}^{p+q} \mu_k\, \mu'_k
\,.\end{split}\end{equation}
Now, considering the decomposition of
the complexified Lie algebra of $G$ under the action of $T$, it is
easy to verify that $T$ is a common maximal torus of $G$ and $K$,
and that the respective roots are given by
\begin{align*}
&\begin{cases}\pm(\widehat{x}_{i}+\widehat{x}_{j})\,,\\
\pm(\widehat{x}_{i}-\widehat{x}_{i})\,,\\
1\leq i<j\leq p+q\,,\end{cases}\; && \pm 2\,\widehat{x}_{i}\,,\; 1\leq i\leq p+q
&&\text{for}\; G\,,\\
&\begin{cases}\pm(\widehat{x}_{i}+\widehat{x}_{j})\,,\\
\pm(\widehat{x}_{i}-\widehat{x}_{j})\,,\\
1\leq i<j\leq p\,,\\
p+1\leq i<j\leq p+q\,,
\end{cases}\;&& \pm 2\,\widehat{x}_{i}\,,\; 1\leq i\leq p+q
&&\text{for}\; K\,,
\end{align*}
We consider as sets of positive roots
\begin{align*}
 \Phi^{+}_G&=\left\{\widehat{x}_{i}+\widehat{x}_{j}\,,\widehat{x}_{i}-\widehat{x}_{i}\,,\, 1\leq i<j\leq p+q\,, 
2\,\widehat{x}_{i}\,,\; 1\leq i\leq p+q\right\}\\
\intertext{and}
\Phi^{+}_K&=\left\{\begin{cases}\widehat{x}_{i}+\widehat{x}_{j}\\ \widehat{x}_{i}-\widehat{x}_{j}\,,\end{cases}\,
\begin{cases}
1\leq i<j\leq p\\
p+1\leq i<j\leq p+q\,,\end{cases}\,2\,\widehat{x}_{i}\,,\; 1\leq i\leq p+q
\right\}\,,
\end{align*}
so the set of positive non-compact roots is
$$\Phi^{+}_{\gp}=\{\widehat{x}_{i}+\widehat{x}_{j}\,,\widehat{x}_{i}-\widehat{x}_{i}\,, 1\leq i\leq p\,, p+1\leq j\leq p+q
\}\,.$$
Note that the sets
\begin{align*}
 \Delta_G&=\left\{ \widehat{x}_i-\widehat{x}_{i+1}\,, 1\leq i\leq p+q-1\,,\; 2\,\widehat{x}_{p+q}\right\}\,,\\
 \intertext{and}
 \Delta_K&=\left\{ \begin{cases}\widehat{x}_i-\widehat{x}_{i+1}\,, 1\leq i\leq p-1\,, 2\,\widehat{x}_{p}\,,\\
   \widehat{x}_i-\widehat{x}_{i+1}\,,
 p+1\leq i\leq p+q-1\,,2\,\widehat{x}_{p+q}\end{cases}
\right\}\,,
\end{align*}
are basis of $G$-roots and $K$-roots respectively. So,

\noindent Any $\mu=(\mu_1,\ldots,\mu_{p+q})\in i\, \gt^{*}$ 
\begin{itemize}
 \item is a dominant $G$-weight if and only if
 $$\mu_1\geq \mu_2\geq \cdots \geq \mu_{p+q}\geq 0\,,$$
 and the $\mu_i$ are all integers,
 \item is a dominant $K$-weight if and only if
 $$\mu_1\geq \mu_2\geq \cdots \geq \mu_{p}\geq 0 \;,\;\mu_{p+1}\geq \mu_{p+2}\geq \cdots \geq \mu_{p+q}\geq 0$$
 and the $\mu_i$ are all integers.
\end{itemize}
Hence the highest weight of $\gp_{\bC}$ is the short root $\alpha=\widehat{x}_1+\widehat{x}_{p+1}$.
\item $\mathrm{Sp}(n)/\mathrm{U}(n)$. With the same notations as above, we consider the subgroup $K$ of $G=\mathrm{Sp}(n)$:
$$K=\left\{A=(a_{ij})\in \mathrm{Sp}(n)\,;\, a_{ij}\in \bR+\mathbf{i}\, \bR\right\}
  \simeq \mathrm{U}(n)\,.$$
Note that $K$ is the set of fixed points of the inner involution:
 $$\sigma:\mathrm{Sp}(n)\rightarrow \mathrm{Sp}(n)\,,\; A\mapsto \mathbf{I} A \mathbf{I}^{-1}\,,$$
 where 
 $$ \mathbf{I}=\mathbf{i}\, I_n\,.$$
 The subspace $\gp=\{A\in\mathfrak{sp}(n)\,;\, \sigma_{*}(A)=-A\}$ is then the set
 $$\gp=\left\{A=(a_{ij})\in \mathfrak{sp}(n)\,;\, a_{ij}=\mathbf{j}\, \bR+\mathbf{k}\, \bR\right\}\,.$$
 The torus $T$ introduced above (\ref{maxT}) is a common torus of $G$ and $K$. Considering the decomposition of
the complexified Lie algebra of $G$ under the action of $T$, it is
easy to verify that $T$ is a common maximal torus of $G$ and $K$,
and that the respective roots are given by
\begin{align*}
&\left\{\pm(\widehat{x}_i-\widehat{x}_j)\,,\; 1\leq i<j\leq n\,,\;\pm(\widehat{x}_i+\widehat{x}_j)\,,\; 1\leq i\leq j\leq n\right\} 
&&\text{for}\; G\,,\\
&\left\{\pm(\widehat{x}_i-\widehat{x}_j)\,,\; 1\leq i< j\leq n\right\} 
&&\text{for}\; K\,,
\end{align*}
We consider as sets of positive roots
\begin{align*}
 \Phi^{+}_G&=\left\{\widehat{x}_i-\widehat{x}_j\,,\; 1\leq i<j\leq n\,,\;\widehat{x}_i+\widehat{x}_j\,,\; 1\leq i\leq j\leq n\right\}\\
\intertext{and}
\Phi^{+}_K&=\left\{\widehat{x}_i-\widehat{x}_j)\,,\; 1\leq i< j\leq n\right\}\,,
\end{align*}
so the set of positive non-compact roots is
$$\Phi^{+}_{\gp}=\{\widehat{x}_{i}+\widehat{x}_{j}\,,\, 1\leq i\leq j\leq n\}\,.$$
Note that the sets
\begin{align*}
 \Delta_G&=\left\{ \widehat{x}_i-\widehat{x}_{i+1}\,, 1\leq i\leq n-1\,,\; 2\,\widehat{x}_{n}\right\}\,,\\
 \intertext{and}
 \Delta_K&=\left\{\widehat{x}_i-\widehat{x}_{i+1}\,, 1\leq i\leq n-1\right\}\,,
\end{align*}
are basis of $G$-roots and $K$-roots respectively (there are only $n-1$ simple $K$-roots as $K$ is not semi-simple). So, 

\noindent Any $\mu=(\mu_1,\ldots,\mu_{n})\in i\, \gt^{*}$ is(\ref{spect0})
 \begin{itemize}
 \item  a dominant $G$-weight if and only if
 $$\mu_1\geq \mu_2\geq \cdots \geq \mu_{n}\geq 0\,,$$
 and the $\mu_i$ are all integers,
 \item  a dominant $K$-weight if and only if 
 $$ \mu_{i}-\mu_{i+1}\in \bN\,,\; 1\leq i\leq n-1\,.$$
 \end{itemize}
Hence there are two dominant $K$-weights in the representation $\gp_{\bC}$: $2\,\widehat{x}_1$ and $\widehat{x}_1+\widehat{x}_2$, but the highest weight is $2\,\widehat{x}_1$ since $\widehat{x}_1+\widehat{x}_2\prec 2\,\widehat{x}_1$. 
Hence the highest weight of $\gp_{\bC}$ is the long root $\alpha=2\,\widehat{x}_1$.
\item $\mathrm{G}_2/\mathrm{SO}(4)$. We use here the results given in \cite{CG} (see page~226), which follow from a general result of Borel-de Siebenthal, \cite{BdS}. A set of $G$-roots $\Phi_G$ is given by the elements $x\in \bR^3$, whose coordinates are integers verifying
$$\sum_{i=1}^3 x_i=0 \quad \text{and} \quad \|x\|^2 = 2\; \text{or}\; 6\,,$$
hence $$\Phi_G=\{\pm(e_1-e_2)\,,\pm(e_2-e_3)\,,\pm(e_3-e_1)\,,\pm(2\,e_1-e_2-e_3)\,,\pm(2\,e_2-e_1-e_3),\pm(2\,e_3-e_1-e_2)\}\,.$$
The following system of positive $G$-roots is choosen:
$$\Phi_G^{+}=\{e_1-e_2\,, e_3-e_2\,,e_3-e_1\,,-2\, e_1+e_2+e_3\,,-2\, e_2+e_3+e_1\,,2\,e_3-e_1-e_2\}\,.$$
It can be checked that a basis of $G$-roots is given by
$$\Delta_G=\{e_1-e_2, -2\, e_1+e_2+e_3\}\,.$$
A system of positive $K$-roots (which appears to be also a basis of $K$-roots) is then given by
$$\Phi_K^{+}=\{e_1-e_2\,, -e_1-e_2+2\, e_3\}\,.$$
The set of positive non-compact roots is
$$\Phi_{\gp}^{+}=\{e_3-e_2\,,e_3-e_1\,,-2\,e_1+e_2+e_3\,,-2\, e_2+e_3+e_1\}\,.$$
There are two dominant weights in $\gp_{\bC}$: $-2\, e_2+e_3+e_1$ and $e_3-e_2$, but the highest weight is $-2\, e_2+e_3+e_1$ since
$e_3-e_2\prec -2\, e_2+e_3+e_1$. Hence the highest weight of $\gp_{\bC}$ is the long root $\alpha=-2\, e_2+e_3+e_1$.
\item $\mathrm{F}_4/\mathrm{Sp}(3)\cdot \mathrm{Sp}(1)$. We use here also results given in \cite{CG} (see page~227).
A set $\Phi_G$ of $G$-roots is given by the elements $x\in \bR^4$ whose coordinates are integers or half-integers satisfying $\|x\|^2=1$ or $2$, \cite{Hum}, hence
$$\Phi_G=\left\{\pm e_i\,,1\leq i\leq 4\,,\, \pm e_i\pm e_j\,,\,1\leq i<j\leq 4\,,\, \frac{1}{2}(\pm e_1\pm e_2 \pm e_3 \pm e_4)\right\}\,.
$$ 
We consider the system of positive roots 
$$\Phi^{+}_G=\left\{e_i\,,\, 1\leq i\leq 4\,,\, e_i\pm e_j\,,\,1\leq i<j\leq 4\,,\,
\frac{1}{2}(e_1\pm e_2 \pm e_3 \pm e_4)\right\}\,.$$
It can be check that a basis  of $G$-roots is given by
$$\Delta_G=\{\alpha_1:=e_2-e_3\,,\, \alpha_2:=e_3-e_4\,,\, \alpha_3:=e_4\,,\, \alpha_4:=\frac{1}{2}(e_1-e_2-e_3-e_4)\}\,.$$
A system of positive $K$-roots is then given by
\begin{align*}
\Phi_K^{+}=\Big\{ &e_3\,,e_4\,,e_1+e_2\,,e_1-e_2\,,e_3+e_4\,,e_3-e_4\,, \frac{1}{2}\,(e_1-e_2-e_3-e_4)\,,\\
&\frac{1}{2}\,(e_1-e_2-e_3+e_4)\,,\frac{1}{2}\,(e_1-e_2+e_3-e_4)\,,\frac{1}{2}\,(e_1-e_2+e_3+e_4)\Big\}\,.
\end{align*}
It can be check that a basis  of  $K$-roots is given by
$$\Delta_K=\left\{e_1+e_2\,,e_4\,,e_3-e_4\,, \frac{1}{2}\,(e_1-e_2-e_3-e_4)\right\}\,.$$
The set of positive non-compact roots is
\begin{align*}
 \Phi_{\gp}^{+}=\Big\{ &e_1\,,e_2\,,e_1+e_3\,,e_1+e_4\,,e_2+e_3,e_2+e_4\,, e_1-e_3\,,e_1-e_4\,,\\
 & e_2-e_3\,,e_2-e_4\,,\frac{1}{2}\,(e_1+e_2+e_3+e_4)\,,\frac{1}{2}\,(e_1+e_2+e_3-e_4)\,,\\
 & \frac{1}{2}\,(e_1+e_2-e_3+e_4)\,,\frac{1}{2}\,(e_1+e_2-e_3-e_4)\Big\}\,.
\end{align*}
There are two dominant weights in $\gp_{\bC}$: $e_1$ and $e_1+e_3$, but the highest weight is $e_1+e_3$ since 
$e_1\prec e_1+e_3$.  Hence the highest weight of $\gp_{\bC}$ is the long root $\alpha=e_1+e_3$.

\item $\mathrm{F}_4/\mathrm{Spin}(9)$. In that case, a system of positive $K$-roots is
$$\Phi^{+}_K=\{e_i\,,\, 1\leq i\leq 4\,,\, e_i\pm e_j\,,\,1\leq i<j\leq 4\}\,.$$
A basis of simple $K$-roots is given by
$$\Delta_K=\left\{e_1-e_2, e_2-e_3, e_3-e_4, e_4\right\}\,.$$(\ref{spect0})
The set of positive non-compact roots is
$$\Phi^{+}_{\gp}=\left\{\frac{1}{2}(e_1\pm e_2 \pm e_3 \pm e_4)\right\}\,,$$
hence the highest weight of the representation $\gp_{\bC}$ is the only dominant weight: the short root $\alpha= \frac{1}{2}(e_1+ e_2 + e_3 +e_4)$.

\end{enumerate}

\subsection{Proof of the lemma \ref{lem.mult}}

\begin{proof}
The proof is very simple if there is only one root lenght, or if $\alpha$ is a long root. In that case, $\beta$ is necessarily the maximal root, hence the highest weight of the adjoint representation of the simple group $G$ in its complexified Lie algebra $\gg_{\bC}$. But the decomposition of $\gg_{\bC}=\gk_{\bC}\oplus \gp_{\bC}$ into $K$-invariant subspaces implies at once that $\rho$ is contained in the restriction of $\rho_{\beta}$ to $K$.

\noindent The proof is a little more involved when two roots lenghts occur and $\alpha$ is a short root. As we saw it just before, there are only three cases to be considered here: $\mathrm{SO}(2p+1)/\mathrm{SO}(2p)$, $\mathrm{Sp}(p+q)/\mathrm{Sp}(p)\times \mathrm{Sp}(q)$, and $\mathrm{F}_4/\mathrm{Spin}(9)$.

\noindent Let $v_{\beta}$ be the maximal vector (unique up to a scalar multiple) of the representation $V_{\beta}$ and let $g\in G$ be some representant of $w^{-1}\in W_G$. First $\rho_{\beta}(g)\cdot v_{\beta}$ is a weight-vector for the weight $\alpha$ since forall $ X \in \gt$,
\begin{align*}
 \rho_{\beta *}(X)\cdot(\rho_{\beta}(g)\cdot v_{\beta})&= {\frac{d}{dt}}_{|t=0}\left(\rho_{\beta}(\exp(t\,X)g)\cdot v_{\beta}\right)\\
 &={\frac{d}{dt}}_{|t=0}\left(\rho_{\beta}(gg^{-1}\exp(t\,X)g)\cdot v_{\beta}\right)\\
 &=\rho_{\beta}(g)\cdot \rho_{\beta *}(\mathrm{Ad}(g^{-1})\cdot X)\cdot v_{\beta}\\
 &= \rho_{\beta}(g)\cdot \beta( w\cdot X)\,v_{\beta}\\
 &= w^{-1}(\beta)(X)\, \rho_{\beta}(g)\cdot v_{\beta}\\
 &= \alpha(X) \, \rho_{\beta}(g)\cdot v_{\beta}\,.
\end{align*}
Now, we claim that there exists $w\in W_G$ such that $\beta=w\cdot \alpha$, and the weight-vector $\rho_{\beta}(g)\cdot v_{\beta}$, where $g$ is a representative of $w^{-1}$, is a maximal vector for the action of $K$. We may consider only the action of a basis of simple $K$-roots $\{\theta_1,\ldots,\theta_r\}$. First note that if $\theta$ is a positive $K$-root, and $E_{\theta}$ a root-vector associated to it, then $\mathrm{Ad}(g^{-1})\cdot E_{\theta}$ is a root-vector for the root $w\cdot \theta$, since
 forall $X\in \gt$,
\begin{align*}
  [X,\mathrm{Ad}(g^{-1})\cdot E_{\theta}]&=\mathrm{Ad}(g^{-1})\cdot[\mathrm{Ad}(g)\cdot X,E_{\theta}]\\
  &= \mathrm{Ad}(g^{-1})\cdot[w^{-1}\cdot X, E_{\theta}]\\
  &= \theta(w^{-1}\cdot X)\, \mathrm{Ad}(g^{-1})\cdot E_{\theta}\\
  &= (w\cdot\theta)(X)\, \mathrm{Ad}(g^{-1})\cdot E_{\theta}\,.
 \end{align*}
Now, if $w\cdot \theta$ is a positive root, then,  since $v_{\beta}$ is a maximal vector killed by any root-vector associated to a positive root,
\begin{align*}
\rho_{\beta *}(E_{\theta})\cdot (\rho_{\beta}(g)\cdot v_{\beta})&={\frac{d}{dt}}_{|t=0}\left(\rho_{\beta}(gg^{-1}\exp(t\,E_{\theta})g)\cdot v_{\beta}\right)\\
 &=\rho_{\beta}(g)\cdot\rho_{\beta *}(\mathrm{Ad}(g^{-1})\cdot E_{\theta})\cdot v_{\beta}\\
 &=\rho_{\beta}(g)\cdot 0=0\,.
\end{align*}
So $\rho_{\beta}(g)\cdot v_{\beta}$ is killed by the action of any positive $K$-root $\theta$ such that $w\cdot \theta$ is a positive root.
Hence we may conclude by proving that, for each symmetric space under consideration, there exists $w\in W_G$ such that $w\cdot\alpha=\beta$, and 
$w\cdot \theta_i$ is a positive $G$-root, for any simple $K$-root $\theta_i$.
\begin{enumerate}
 \item $\mathrm{SO}(2p+1)/\mathrm{SO}(2p)$. We saw above that the highest weight of the representation $\gp_{\bC}$ is the short root $\alpha=\widehat{x_1}$, which is also the highest shortest root $\beta$ of $G$. We may choose $w=id$, and as all the $K$-simple roots in $\Delta_K$ are $G$-positive roots, the claim is verified in that case. 
\item $\mathrm{Sp}(p+q)/\mathrm{Sp}(p)\times \mathrm{Sp}(q)$. We saw above that the highest weight of the representation $\gp_{\bC}$ is the short root $\alpha=\widehat{x_1}+\widehat{x}_{p+1}$. Now the highest short $G$-root is
$\beta=\widehat{x}_1+\widehat{x}_{2}$. Let $w$ be the element of the Weyl group $W_G$ given by the $p$-cycle permutation 
$(2\,3\cdots p+1)$. One has $w\cdot \alpha=\beta$, and it is easily verified that $w\cdot \theta_i$ is a positive $G$-root for any simple $K$-root $\theta_i$, hence the claim is also proved in that case.
\item $\mathrm{F}_4/\mathrm{Spin}(9)$. We saw above that the highest weight of the representation $\gp_{\bC}$ is the short root $\alpha= \frac{1}{2}(e_1+ e_2 + e_3 +e_4)$. Now, the highest short $G$-root is $\beta= e_1$, and the reflection $\sigma_{\alpha_4}$ across the hyperplane $\alpha_4^{\bot}$ verifies $\sigma_{\alpha_4}\cdot \alpha=\beta$. It is easily verified that $\sigma_{\alpha_4}\cdot \theta_i$ is a positive $G$-root for any simple $K$-root $\theta_i$ since 
\begin{align*}
 \sigma_{\alpha_4}:  e_1-e_2&\mapsto e_3+e_4\,,\\
  e_2-e_3 &\mapsto e_2-e_3\,,\\
  e_3-e_4 &\mapsto e_3 -e_4\,,\\
 e_4 &\mapsto \frac{1}{2}\,(e_1-e_2-e_3+e_4)\,.
\end{align*}
Hence the claim is also proved in that case.
\end{enumerate}
\end{proof}

\subsection{First eigenvalue of the Laplace operator acting on $1$-forms}
\noindent In order to conclude, we have to verify that the Casimir eigenvalue $c_{\beta}$ is the lowest eigenvalue of the Laplacian.
\begin{lem} Let $(\rho_{\gamma}, V_{\gamma})$ be an irreducible $G$ representation such that the multiplicity  $\mathrm{mult}(\rho,\mathrm{Res}^{G}_{K}(\rho_{\gamma}))\not =0$. Then $c_{\beta}\leq c_{\gamma}$.
\end{lem}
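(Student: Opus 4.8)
The statement is that among all $\gamma\in\widehat G$ whose restriction to $K$ contains the isotropy representation $\rho$, the Casimir eigenvalue is minimized by $\gamma=\beta$ (the highest long or short root, whichever is conjugate to the highest weight $\alpha$ of $\gp_{\bC}$). By the Freudenthal formula the Casimir eigenvalue of the irreducible representation with highest weight $\mu$ is $c_\mu=\langle\mu+2\delta_G,\mu\rangle=\langle\mu+\delta_G,\mu+\delta_G\rangle-\langle\delta_G,\delta_G\rangle$, which is a strictly increasing function of $\mu$ in the dominance order on dominant weights, and more generally $c_\mu\le c_{\mu'}$ whenever $\langle\mu'+\delta_G,\mu'+\delta_G\rangle\ge\langle\mu+\delta_G,\mu+\delta_G\rangle$. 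The first step is therefore to record this monotonicity and reduce the lemma to the following weight-theoretic claim: if $\gamma$ has highest weight $\Lambda$ and $\mathrm{Res}^G_K(\rho_\gamma)$ contains $\rho$, then $c_\Lambda\ge c_\beta$, with equality forcing $\Lambda$ to be $W_G$-conjugate to $\beta$.

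The key observation is that $\mathrm{mult}(\rho,\mathrm{Res}^G_K(\rho_\gamma))\neq 0$ forces the $\gt$-weight $\alpha$ (the highest weight of $\gp_{\bC}$, viewed as a $T$-weight) to occur as a weight of $V_\gamma$: since $\rho$ appears in the restriction, every $K$-weight of $\gp_{\bC}$, in particular $\alpha$, is a weight of $V_\gamma$. Hence $\alpha$ lies in the convex hull of $W_G\cdot\Lambda$, and in particular $\Lambda$, being the highest weight, dominates some $W_G$-translate of $\alpha$; equivalently $\|\Lambda\|\ge\|\alpha\|=\|\beta\|$ and, more precisely, $\Lambda-w\alpha$ is a nonnegative integer combination of simple roots for a suitable $w$. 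The second step is to turn this into the Casimir comparison: one shows $\langle\Lambda+\delta_G,\Lambda+\delta_G\rangle\ge\langle\beta+\delta_G,\beta+\delta_G\rangle$. For this I would use that $\beta$ itself is dominant (the highest long or short root is dominant) and is a weight of $V_\gamma$ as well — actually it is cleaner to argue with $\beta$ directly: since $\beta=w_0\alpha$ for some $w_0\in W_G$ and $\alpha$ is a weight of $V_\gamma$, so is $\beta$; as $\beta$ is dominant, $\Lambda-\beta$ is a sum of positive roots (the standard fact that a dominant weight of $V_\gamma$ is $\le\Lambda$ in the root order, combined with dominance of $\beta$). Then
\[
\langle\Lambda+\delta_G,\Lambda+\delta_G\rangle-\langle\beta+\delta_G,\beta+\delta_G\rangle=\langle\Lambda-\beta,\Lambda+\beta+2\delta_G\rangle\ge 0,
\]
because $\Lambda-\beta$ is a nonnegative combination of positive roots and $\Lambda+\beta+2\delta_G$ is strictly dominant, so pairs nonnegatively with every positive root. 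This gives $c_\Lambda\ge c_\beta$, i.e. $c_\beta\le c_\gamma$.

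The main obstacle is the clean justification that $\beta$ (equivalently $\alpha$) is indeed a weight of $V_\gamma$ and that $\beta\le\Lambda$ in the root order. The first part is immediate from $\mathrm{mult}(\rho,\mathrm{Res}^G_K\rho_\gamma)\neq0$: restricting to $T\subset K$, the $T$-weights of $\gp_{\bC}$ (which include $\alpha$ and all its $W_K$-images) must appear among the $T$-weights of $V_\gamma$. The second part requires knowing that a \emph{dominant} weight $\nu$ of an irreducible representation of highest weight $\Lambda$ satisfies $\Lambda-\nu\in\bigoplus_{i}\bN\,\alpha_i$ (sum over simple $G$-roots); this is the well-known saturation/convexity statement for weights of irreducibles (see e.g. \cite{Hum}, and applies since $\beta$ is $G$-dominant). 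With these two facts the inequality above is a one-line computation. Finally, for the equality discussion one notes that $c_\Lambda=c_\beta$ forces $\langle\Lambda-\beta,\Lambda+\beta+2\delta_G\rangle=0$, hence $\Lambda=\beta$ since $\Lambda+\beta+2\delta_G$ is strictly dominant; combined with Lemma~\ref{lem.mult}, $\beta$ itself does occur, so $c_\beta$ is attained and is the first nonzero eigenvalue of $\Delta_1$. This completes the proof of Proposition~\ref{result}.
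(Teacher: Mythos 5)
Your proposal is correct and takes essentially the same route as the paper: both arguments reduce to the observation that $\alpha$, and hence its Weyl conjugate $\beta$, must occur among the $T$-weights of $V_\gamma$, and then invoke the inequality $\|\beta+\delta_G\|^2\le\|\gamma+\delta_G\|^2$ for weights of an irreducible representation. The only difference is cosmetic: the paper cites Lemma~C, p.~71 of \cite{Hum} for that inequality, whereas you reprove it inline via $\langle\Lambda-\beta,\Lambda+\beta+2\delta_G\rangle\ge 0$ (which is exactly Humphreys' proof, using that $\beta$ is dominant).
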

\begin{proof}
 Recall that the highest weight of $\rho$ is $\alpha$, hence if $\mathrm{mult}(\rho,\mathrm{Res}^{G}_{K}(\rho_{\gamma}))\not =0$, then $\alpha$ and $\beta=w\cdot \alpha$ are actually weights of the representation $\rho_\gamma$. But then, as $\gamma$ is the highest weight,
 $$\langle \beta+\delta_G, \beta+\delta_G\rangle \leq \langle \gamma+\delta_G,
 \gamma+\delta_G\rangle\,,$$
 cf. Lemma~C, p.71 in \cite{Hum}, hence 
 $$c_{\beta}=\|\beta+\delta_G\|^2-\|\delta_G\|^2\leq \|\gamma+\delta_G\|^2-\|\delta_G\|^2=c_{\gamma}\,.$$
\end{proof}

 \begin{remark} The fact that $c_{\beta}=1$ when $\beta$ is the highest long root may seem to be rather puzzling. This is indeed a consequence of  Freudenthal's formula (cf. 48.2 in \cite{FdV} or p. 123 in \cite{Hum}): for any $G$-weight $\mu$ of the representation $(\rho_{\beta},V_{\beta})=(\mathrm{Ad}, \gg_{\bC})$ , the multiplicity $\mathrm{mult}(\mu)$ of $\mu$ is given recursively by the formula
 $$(\langle \beta+\delta_G,\beta+\delta_G\rangle -\langle \mu+\delta_G,\mu+\delta_G\rangle)\, \mathrm{mult}(\mu)=2\, \sum_{\theta\succ 0}\,\sum_{i=1}^{\infty}
  \mathrm{mult}(\mu+i\theta)\, \langle \mu +i\theta,\theta\rangle\,.$$
  Applying this formula to the weight $\mu=0$, one obtains since 
  \begin{itemize}
   \item $\mathrm{mult}(0)=\dim(\gt)$, as $T$ is a maximal common torus,
   \item for any integer $i\geq 1$, $\mathrm{mult}(i\theta)\not =0\Leftrightarrow i=1$, and 
   $\mathrm{mult}(\theta)=1$, by properties of roots, since the only multiple of a root $\theta$ which is itself a root is $\pm \theta$,
  \end{itemize}
 $$(\|\beta+\delta_G\|^2-\|\delta_G\|^2)\, \dim(\gt)=2\, \sum_{\theta\succ 0} \|\theta\|^2\,.$$
But Gordon Brown's formula (\cite{Bro64} or 21.5 in \cite{FdV}) states that
$$2\, \sum_{\theta\succ 0} \|\theta\|^2=\dim(\gt)\,,$$
hence $$c_{\beta}=1\,.$$
\end{remark}

\section{The first non-zero eigenvalue of the Laplace operator on functions} 

\noindent As it was noticed in the introduction, any (non-zero) eigenvalue $\lambda$ of the Laplace operator on functions is greater or equal to the first eigenvalue of the Laplace operator on $1$-forms. We now examine in which case this inequality is an equality.

\begin{prop} Let $\lambda$ be the first non-zero eigenvalue of the Laplace operator on functions, and let $\mu$ be the first eigenvalue of the Laplace operator on $1$-forms.

\noindent If $G/K$ is a Hermitian symmetric space:
\begin{equation*}\begin{split}
\mathrm{SU}(p+q)/\mathrm{S}(\mathrm{U}(p)\times \mathrm{U}(q))\quad,\quad \mathrm{SO}(p+2)/\mathrm{SO}(n)\times \mathrm{SO}(2)\quad,\quad
\mathrm{Sp}(n)/\mathrm{U}(n)\,,\\
\mathrm{SO}(2n)/\mathrm{U}(n)\quad ,\quad \mathrm{E}_6/\mathrm{SO}(10)\cdot \mathrm{SO}(2)\quad ,\quad \mathrm{E}_7/\mathrm{E}_6\cdot \mathrm{SO}(2)\,,
\end{split}
\end{equation*}
or if $G/K$ is one of the symmetric spaces for which the highest weight of the isotropy representation is a short root: 
$$\mathrm{SO}(2p+1)/\mathrm{SO}(2p)\quad ,\quad  \mathrm{Sp}(p+q)/\mathrm{Sp}(p)\times\mathrm{Sp}(q)\quad \text{or}\quad
\mathrm{F}_4/\mathrm{Sp}(3)\cdot\mathrm{Sp}(1)\,,$$ then $\lambda=\mu$.

\noindent In all the other cases, $\lambda>\mu$.
 
\end{prop}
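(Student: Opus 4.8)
The plan is to extract both eigenvalues from the Peter--Weyl decomposition recalled in the preliminaries. By \eqref{spect0}, $\mathrm{Spec}(\Delta_0)$ is the set of Casimir eigenvalues $c_\gamma$ of those $\gamma\in\widehat G$ whose representation space $V_\gamma$ carries a non-zero $K$-invariant vector, i.e.\ of the \emph{spherical} representations of the pair $(G,K)$; hence $\lambda$ is the smallest such $c_\gamma$ with $\gamma$ non-trivial. Since $\lambda\geq\mu=c_\beta$ is already known, everything reduces to deciding whether some non-trivial spherical representation has Casimir eigenvalue equal to $c_\beta$: if so, $\lambda=\mu$; otherwise $\lambda>\mu$. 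Note also that $\mu=c_\beta=1$ whenever $\beta$ is the highest root of $G$, in particular for every Hermitian symmetric space in the list, $\alpha$ (hence $\beta$) being long there.

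For the Hermitian symmetric spaces I would use that $K$ has a one-dimensional centre: $\mathrm{Ad}(K)$ then fixes the line $\mathfrak{z}(\gk)_{\bC}\subset\gg_{\bC}$ pointwise, so the adjoint representation $(\mathrm{Ad},\gg_{\bC})=(\rho_\beta,V_\beta)$ is spherical, $\lambda\leq c_\beta=\mu$, and therefore $\lambda=\mu$. For the three spaces of Subsection~\ref{sh.root}, where $\alpha$ and $\beta$ are short, I would instead produce a $K$-invariant vector in $V_\beta$ explicitly: for $\mathrm{SO}(2p+1)/\mathrm{SO}(2p)=S^{2p}$, $V_\beta$ is the vector representation $\bC^{2p+1}$, which restricts to $\mathrm{SO}(2p)$ as $\bC^{2p}\oplus\bC$; for $\mathrm{Sp}(p+q)/\mathrm{Sp}(p)\times\mathrm{Sp}(q)$, $V_\beta$ is the primitive part $\wedge^2_0\bC^{2(p+q)}$ of the second exterior power of the standard representation, and writing $\bC^{2(p+q)}=\bC^{2p}\oplus\bC^{2q}$ one obtains a $K$-invariant vector from the appropriate combination of the two partial symplectic forms; for $\mathrm{F}_4/\mathrm{Spin}(9)$, $V_\beta$ is the $26$-dimensional fundamental representation, which restricts to $\mathrm{Spin}(9)$ as $\bC\oplus\bC^{9}\oplus\bC^{16}$. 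In each case $\lambda\leq c_\beta=\mu$, hence $\lambda=\mu$.

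For the remaining spaces — the real Grassmannians among $\mathrm{SO}(2p+2q)/\mathrm{SO}(2p)\times\mathrm{SO}(2q)$ and $\mathrm{SO}(2p+2q+1)/\mathrm{SO}(2p)\times\mathrm{SO}(2q+1)$ not covered by the two previous cases (equivalently, those for which $K$ is semisimple), together with $\mathrm{G}_2/\mathrm{SO}(4)$, $\mathrm{F}_4/\mathrm{Sp}(3)\cdot\mathrm{Sp}(1)$, $\mathrm{E}_6/\mathrm{SU}(6)\cdot\mathrm{SU}(2)$, $\mathrm{E}_7/(\mathrm{SU}(8)/\{\pm I\})$, $\mathrm{E}_7/\mathrm{SO}'(12)\cdot\mathrm{SU}(2)$, $\mathrm{E}_8/\mathrm{SO}'(16)$ and $\mathrm{E}_8/\mathrm{E}_7\cdot\mathrm{SU}(2)$ — one has $\mu=1$ and the claim is $\lambda>1$. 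Since $K$ is semisimple, $\gk_{\bC}$ has no trivial summand and, $\gp_{\bC}$ being a non-trivial irreducible $K$-module, $\gg_{\bC}$ has no $K$-invariant vector, so the adjoint representation is not spherical; it remains to check that no other spherical representation has Casimir eigenvalue $\leq 1$. The plan is to run through the finite set of spherical highest weights — described by the Cartan--Helgason theorem, or read off the known description of $\mathrm{Spec}(\Delta_0)$ for these spaces (\cite{Nag61} and the references of the introduction) — and to verify that the smallest Casimir eigenvalue among them is $>1$. For the real Grassmannians this amounts to the computation $c_{2\h{x}_1}=\tfrac{n}{n-2}>1$ (with $n=2p+2q$ or $2p+2q+1$), the weight $2\h{x}_1$ being spherical because the traceless part $\mathrm{Sym}^2_0\bC^{n}$ contains the difference of the two partial trace forms; for the exceptional spaces one checks the analogous statement for the lowest spherical fundamental weight.

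I expect this last step to be the only real difficulty: for the spaces of the third group one must identify the generators of the monoid of spherical highest weights (equivalently, which fundamental weights of $G$ are spherical for the given symmetric pair) and compare the Casimir eigenvalue of the lowest generator with $1$, which is a case-by-case matter — whereas the Hermitian and the short-root cases reduce at once to exhibiting a single explicit $K$-invariant vector.
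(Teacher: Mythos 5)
Your treatment of the first two groups of spaces coincides with the paper's: the reduction of $\lambda=\mu$ to the sphericity of $\rho_\beta$ itself, the centre-of-$K$ argument in the Hermitian case, and the explicit $K$-invariant vectors (the vector $e_{2p+1}$ for the sphere, the primitive part of the partial symplectic form for the quaternionic Grassmannian, the trivial summand of the $26$-dimensional representation under $\mathrm{Spin}(9)$) are exactly those used there. One side remark: you have tacitly corrected the statement, which lists $\mathrm{F}_4/\mathrm{Sp}(3)\cdot\mathrm{Sp}(1)$ among the short-root spaces, whereas the paper's table, its Subsection~\ref{sh.root} and its own proof all identify the short-root space as $\mathrm{F}_4/\mathrm{Spin}(9)$ and place $\mathrm{F}_4/\mathrm{Sp}(3)\cdot\mathrm{Sp}(1)$ in the $\lambda>\mu$ class; your reading is the intended one.

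The genuine gap is in the third group. There you must show that \emph{every} non-trivial spherical representation has Casimir eigenvalue $>1$, and your plan --- enumerate the generators of the monoid of spherical highest weights via Cartan--Helgason and compare Casimir eigenvalues --- is only executed for the real Grassmannians; for $\mathrm{G}_2/\mathrm{SO}(4)$, $\mathrm{F}_4/\mathrm{Sp}(3)\cdot\mathrm{Sp}(1)$ and the five exceptional $\mathrm{E}$-spaces you defer to ``the analogous statement for the lowest spherical fundamental weight'', which is precisely the hard content for those spaces and is nowhere carried out (the spherical weights of, say, $\mathrm{E}_8/\mathrm{SO}'(16)$ do not appear in your argument). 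Note also that the spherical highest weights form an infinite monoid, not a finite set, so you would in addition need the easy but unstated monotonicity $c_{\gamma+\gamma'}\geq c_{\gamma}$ for dominant $\gamma,\gamma'$ in order to reduce to generators. The paper closes this case uniformly, with no enumeration: if $v$ is a non-zero $K$-fixed vector in a non-trivial spherical $V_\gamma$, then $v$ (which has weight $0$) cannot be annihilated by all simple root vectors, so some non-compact simple root $\theta$ satisfies $\rho_{\gamma*}(E_\theta)\cdot v\neq 0$ and is therefore a weight of $\gamma$; the case-by-case study in Subsection~\ref{sh.root} shows that non-compact simple roots are always long for these spaces, so by Weyl conjugacy the highest root $\beta$ is a weight of $\gamma$, and since $\gamma$ is not the adjoint representation (the space being non-Hermitian), Lemma~C of \cite{Hum} gives $c_\gamma>c_\beta=1$. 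You should either import this argument or actually perform the Cartan--Helgason computation for each exceptional space; as it stands, the inequality $\lambda>\mu$ is unproved for seven of the spaces in your third list.
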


\begin{proof}
\noindent  By (\ref{spect0}), the spectrum of the Laplace operator on functions is given by considering the irreducible representations $\gamma\in \widehat{G}$ such that $\mathrm{mult}(\mathrm{triv.repr.},\mathrm{Res}^{G}_{K}(\rho_{\gamma}))\neq 0$, or equivalently, such that there exists a non-zero weight-vector trivially acted by the group $K$. Such representations  are called spherical representations \cite{Sug62}, \cite{Tak94}.

\noindent Hence, to prove the equality $\lambda=\mu$, we have to verify that\\ $\mathrm{mult}(\mathrm{triv.repr.},\mathrm{Res}^{G}_{K}(\rho_{\beta}))\neq 0$, where $\beta$ is the highest long or short root. Indeed, if that is verified, then $c_\beta\geq \lambda$, as $c_{\beta}$ belongs to the spectrum. But, as it was remarked in the introduction, $\lambda\geq c_{\beta}$, since $\lambda$ has to be  greater or equal to the first eigenvalue of Laplace operator on $1$-forms.

\noindent Let us first examine the case when $\beta$ is the highest long root. In that case, $\rho_{\beta}$ is the adjoint representation of the simple group $G$ on its complexified algebra $\gg_{\bC}$, hence a non-zero root-vector is trivially acted by the subgroup $K$ if and only if it belongs to the center of $K$. But this is only possible if and only if $G/K$ is hermitian, see for instance \cite{Wol}.

\noindent Let us consider now the three symmetric spaces where two root lengths occur and $\beta$ is the highest short root:
$\mathrm{SO}(2p+1)/\mathrm{SO}(2p)$, $\mathrm{Sp}(p+q)/\mathrm{Sp}(p)\times \mathrm{Sp}(q)$, and 
and $\mathrm{F}_4/\mathrm{Spin}(9)$. Going back to the case-by-case study of those three symmetric spaces above, one gets:
\begin{enumerate}
 \item $\mathrm{SO}(2p+1)/\mathrm{SO}(2p)$. The highest short root is $\beta=\widehat{x}_1$, which is the highest weight of the fundamental standard representation of the group  $\mathrm{Spin}(2p+1)$ (or $\mathrm{SO}(2p+1)$) in the space $\bC^{2p+1}$, see for instance chapter~12 in 
 \cite{BH3M}. Now, the group $K$ acts trivially on the last vector $e_{2p+1}$ of the canonical basis $(e_1,\ldots,e_{2p+1})$ of $\bC^{2p+1}$
 since the inclusion $K\subset G$ is induced by the natural inclusion
 $$A\in \mathrm{SO}(2p)\longmapsto \begin{pmatrix} A & 0\\ 0 & 1\end{pmatrix}\in \mathrm{SO}(2p+1)\,.$$
 Hence $\mathrm{mult}(\mathrm{triv.repr.},\mathrm{Res}^{G}_{K}(\rho_{\beta}))\neq
0$.
\item $\mathrm{Sp}(p+q)/\mathrm{Sp}(p)\times\mathrm{Sp}(q)$. The highest short root is $\beta=\widehat{x}_1+\widehat{x}_{2}$, which is the highest weight of the fundamental representation of the group $\mathrm{Sp}(p+q)$ in the space $\wedge^{2}_{0}(\bC^{2(p+q)})$.  To be more explicit, 
 $\bH^{p+q}$ is identified with $\bC^{2(p+q)}$ here, such that $\mathrm{Sp}(p+q)\simeq \mathrm{SU}(2(p+q))\cap \mathrm{Sp}(2(p+q),\bC)$, and the representation of $\mathrm{Sp}(p+q)$ in the space $\wedge^2(\bC^{2(p+q)})$ decomposes into two irreducible pieces:
 \begin{equation}\label{decomp}
  {\wedge}^2(\bC^{2(p+q)})={\wedge}^2_{0}(\bC^{2(p+q)})\oplus \bC\cdot \omega_{p+q}\,,
 \end{equation}
 where the first piece is  the fundamental representation with highest weight $\widehat{x}_1+\widehat{x}_{2}$, and the second one the trivial representation on the space generated by the symplectic $2$-form:
 $$\omega_{p+q}:=e_1\wedge e_{-1}+e_2\wedge e_{-2}+\cdots +e_{p+q}\wedge e_{-(p+q)}\,,$$
where $(e_1,e_2,\ldots,e_{p+q},e_{-1},e_{-2},\ldots,e_{-(p+q)})$ is the canonical basis of\\ $\bC^{2(p+q)}$, see for instance chapter~12 in 
 \cite{BH3M}.

\noindent Note then that the $2$-form
$$ e_1\wedge e_{-1}+e_2\wedge e_{-2}+\cdots+ e_{p}\wedge e_{-p}\,,$$
is $K=\mathrm{Sp}(p)\times\mathrm{Sp}(q)$-invariant, so its (non-zero) component in $\wedge^2_{0}(\bC^{2(p+q)})$ under the decomposition (\ref{decomp}) is also an invariant of 
the group $K$, hence $\mathrm{mult}(\mathrm{triv.repr.},\mathrm{Res}^{G}_{K}(\rho_{\beta}))\neq
0$.

\item $\mathrm{F}_4/\mathrm{Spin}(9)$. The highest short root is $\beta= e_1$. The function ``{branch}'' in the LiE Program, \cite{Lie},
$$branch([0,0,0,1],B4,res\_mat(B4,(F4)),(F4))\,,$$
returns that the dominant weights in the decomposition of $\mathrm{Res}^{G}_{K}(\rho_{\beta})$ (expressed in the basis of fundamental weights) are
$$[0,0,0,0]\,,\quad [0,0,0,1] \quad \text{and}\quad [1,0,0,0]\,,$$
hence $\mathrm{mult}(\mathrm{triv.repr.},\mathrm{Res}^{G}_{K}(\rho_{\beta}))=1\neq
0$. 

\noindent The result may be also verified with the help of the Satake diagram of $\mathrm{F}_4/\mathrm{Spin}(9)$ (see \cite{Sug62}).
\end{enumerate}
\noindent Consider now all the remaining symmetric spaces. The highest weight of their isotropy representation is long. Let $\gamma\in \widehat{G}$ be such that $c_{\gamma}=\lambda$ and\\ $\mathrm{mult}(\mathrm{triv.repr.},\mathrm{Res}^{G}_{K}(\rho_{\gamma})\neq 0$. The irreducible representation $\gamma$ is not the adjoint representation of group $G$ on its complexified algebra $\gg_{\bC}$, since otherwise $G/K$ should be hermitian. Let $v$ be a weight-vector trivially acted by the group $K$. This vector $v$ is not killed by the action of at least one root-vector associated to a simple (non-compact) root, since otherwise $\gamma$ should not be irreducible. The corresponding positive non-compact root belongs then to the set of weights. We claim that such a root is long. This is obviously true if there is only one root-lenght. But this is also true if there are two root-lenghts: going back to the symmetric spaces considered in section~\ref{sh.root}, it can be checked that in all the cases under consideration, any short simple root of $G$ is always also a simple root of $K$. Now, since all roots of a given lenght are conjugate under the Weyl group (see for instance Lemma~C, p. 71 in \cite{Hum}), one can deduce that the highest long root $\beta$ belongs to the set of weights of $\gamma$. 
As $\beta$ is not the highest weight of $\gamma$, since otherwise $G/K$ should be hermitian, one can conclude using Lemma~C, p.71 in \cite{Hum}
that $c_{\gamma}>c_{\beta}=1$, hence $\lambda>\mu$.
\end{proof}

\noindent Francis Burstall has pointed out to us that the three cases : $\lambda<1$, $\lambda=1$ and $\lambda>1$ exactly amount the three possibilities for the second homotopy group of $G/K$ : $\lambda<1$ if $\pi_2(G/K)$ is trivial, $\lambda=1$ if $\pi_2(G/K)=\bZ$, and $\lambda>1$ if $\pi_2(G/K)=\bZ/\bZ_{2}$, see chapter~3 in \cite{BR90}.

\noindent To conclude, we have to compute the Casimir eigenvalue $c_{\beta}$ for the three symmetric spaces for which the highest weight of the isotropy representation is a short root.
\begin{enumerate}
 \item $\mathrm{SO}(2p+1)/\mathrm{SO}(2p)$. Here $\beta=\widehat{x}_1$. The half-sum of the positive $G$-roots is 
 $$\delta_G=\left(p-\frac{1}{2},p-\frac{3}{2},\ldots,\frac{3}{2},\frac{1}{2}\right)\,.$$
 Using (\ref{scp1}), one gets
 $$c_{\beta}=\langle\beta+2\,\delta_G,\beta\rangle=\frac{2\,p}{2\,(2p-1)}=\frac{p}{2p-1}\,.$$
 \item $\mathrm{Sp}(p+q)/\mathrm{Sp}(p)\times\mathrm{Sp}(q)$. Here $\beta=\widehat{x}_1+\widehat{x}_2$. The half-sum of the positive roots is
 $$\delta_G=\left(p+q,p+q-1,\ldots, 2,1\right)\,.$$
 Using (\ref{scp2}), one gets
 $$c_{\beta}=\langle\beta+2\,\delta_G,\beta\rangle=\frac{4\,(p+q)}{4\,(p+q+1)}=\frac{p+q}{p+q+1}\,.$$
 \item $\mathrm{F}_4/\mathrm{Spin}(9)$. Here $\beta= e_1$. The half-sum of the positive roots is
 $$\delta_G=\frac{1}{2}\,(11\, e_1+5\, e_2+3\, e_3+e_4)\,.$$
 We considered above the scalar product on weights induced by the usual scalar product on $\bR^4$. In order to compare it with the scalar product induced by the Killing form sign-changed, we use the ``{strange formula}'' of Freudenthal and de Vries (see 47-11 in \cite{FdV}).
 For the scalar product $(\;,\;)$ induced by the usual scalar product on $\bR^4$:
 $$(\delta_G,\delta_G)=39\,,$$
 whereas for the scalar product $\langle\;,\;\rangle$ induced by Killing form sign-changed: 
 $$\langle \delta_G,\delta_G\rangle=\frac{\dim(G)}{24}=\frac{13}{6}\,.$$
 Hence, as the two $\mathrm{Ad}_G$-invariant scalar products have to be proportional since
 $G$ is simple, 
 $$ \langle\;,\;\rangle= \frac{1}{18}\,(\;,\;)\,,$$
 hence 
 $$c_{\beta}=\langle\beta+2\,\delta_G,\beta\rangle=\frac{12}{18}=\frac{2}{3}\,.$$
 
\end{enumerate}

\bibliographystyle{amsalpha}
\bibliography{diracbib}

\end{document}